\algnewcommand\algorithmicreturn{\textbf{return}}
\algnewcommand\RETURN{\State \algorithmicreturn}%
\pgfplotsset{
  tick label style={font=\footnotesize},
  label style={font=\footnotesize},
  legend style={font=\footnotesize}
}
\newcommand{\R}{\mathbb{R}}
\newcommand{\cI}{\mathcal{I}}
\newcommand{\cR}{\mathcal{R}}
\newcommand{\cS}{\mathcal{S}}
\newcommand{\cT}{\mathcal{T}}
\newcommand{\cA}{\mathcal{A}}
\newcommand{\cP}{\mathcal{P}}
\newcommand{\cL}{\mathcal{L}}
\newcommand{\cH}{\mathcal{H}}
\newcommand{\cD}{\mathcal{D}}
\newcommand{\cQ}{\mathcal{Q}}
\newcommand{\cK}{\mathcal{K}}
\newcommand{\ztilde}{\tilde{z}}
\newcommand{\vtilde}{\tilde{v}}
\newcommand{\gra}{\mathsf{gra}}
\newcommand{\be}{\begin{equation}}
\newcommand{\ee}{\end{equation}}
\newtheorem{assumption}{Assumption}
\newtheorem{remark}{Remark}
\newif\ifcompilePGFfigs
\title{On Douglas-Rachford splitting that generally fails to be a proximal mapping: A degenerate proximal point analysis}
\author{Feng Xue\thanks{National key laboratory, Beijing,  China (\tt{fxue@link.cuhk.edu.hk}).}  }
\date{\today}
\begin{document}

\maketitle

\begin{abstract}
Based on a degenerate proximal point analysis, we show that the Douglas-Rachford splitting can be reduced to a well-defined resolvent, but generally fails to be a proximal mapping. This extends the recent result of [Bauschke, Schaad and Wang. Math. Program., 168 (2018), pp. 55--61] to more general setting. The related concepts and consequences are also discussed. In particular, the results regarding the maximal and cyclic monotonicity are instrumental for analyzing many operator splitting algorithms.
\end{abstract}

\begin{keywords}
Douglas-Rachford splitting (DRS), degenerate proximal point algorithm, proximal mapping, maximal monotonicity, cyclic monotonicity.
\end{keywords}

\begin{AMS}
47H09,  47H05, 90C25, 68Q25 
\end{AMS}

\section{Introduction}
It was shown in a recent work of \cite{bau_fail} that the Douglas-Rachford splitting (DRS) fails to be a proximal mapping in a class of symmetric linear relations that are maximally monotone. We in this short note extend this result to general setting under a degenerate proximal point framework recently established in \cite{bredies_preprint}.

This work also extends the results of an early seminal work \cite{drs_1992}, by (i) giving an   explicit form of the resolvent corresponding to the DRS, compared to the implicit expression given in \cite[Sect. 4]{drs_1992}; (ii) proving its maximality of the associated monotone operator on its own right, without resorting to the full domain of the resolvent, as \cite[Theorem 4]{drs_1992} did.

The rest of this paper is organized as follows. In Sect. \ref{sec_aux}, we develop some auxiliary results on degenerate proximal point algorithm,  maximal monotonicity and cyclic monotonicity. Sect. \ref{sec_main} contains the main result, which is further discussed in Sect. \ref{sec_dis}.

Our definitions and notations are standard and follow  largely, e.g.,  \cite{rtr_book,plc_book}.

\section{Auxiliary results} \label{sec_aux}
\subsection{Reformulation of DRS as a degenerate proximal point algorithm}
For solving a monotone inclusion:
\be \label{problem}
\text{find\ } x\in \cH, \quad \text{such that\ } 0 \in (A+B)x,
\ee
where $A: \cH \mapsto 2^\cH$ and $B: \cH \mapsto 2^\cH$ are (set-valued) maximally monotone on $\cH$, the standard DRS reads as \cite{lions}
\be \label{drs}
z^{k+1} := z^k -   J_{\tau B}  (z^k) +  J_{\tau A}  (2   J_{\tau B}( z^k)   -   z^k \big) .
\ee

The following lemma shows that the DRS scheme \eqref{drs} exactly fits into a standard form of a proximal point algorithm (PPA)  \cite{bredies_preprint}:
\be \label{ppa}
b^{k+1} : \in (\cA  +\cQ)^{-1} \cQ b^k.
\ee

\begin{lemma} \label{l_ppa}
Given the standard DRS scheme \eqref{drs}, the following hold.

{\rm (i)} The scheme \eqref{drs} is equivalent to
\be \label{drs_4}
\left\lfloor \begin{array}{lll}
 u^{k+1} & := &  J_{\frac{1}{\tau} B^{-1} } \big(
 \frac{1}{\tau} z^k \big) ,  \\
 s^{k+1} & := &   J_{\frac{1}{\tau} A^{-1} } \big(\frac{1}{\tau} z^k -2u^{k+1} \big),  \\
 z^{k+1} & :=  & z^k - \tau ( s^{k+1} +  u^{k+1} ) .
 \end{array} \right.
\ee

{\rm (ii)} \eqref{drs_4} fits into the PPA form \eqref{ppa}  with $b \in \cH \times \cH \times \cH$, $\cA: \cH \times \cH \times \cH \mapsto 2^\cH \times 2^\cH \times \cH$ and $\cQ: \cH \times \cH \times \cH \mapsto \{0\} \times \{0\} \times \cH$ given as
\[
b = \begin{bmatrix}
 u  \\ s \\ z \end{bmatrix},\  
 \cA = \begin{bmatrix}
B^{-1}  & -\tau I   & - I  \\
\tau I   & A^{-1}  &   -I \\
I   & I  &  0
\end{bmatrix},\   
\cQ =    \begin{bmatrix}
 0  &  0 &  0 \\  0 &  0    &  0 \\
  0 &   0  & \frac{1}{\tau} I 
\end{bmatrix}  .
\]
\end{lemma}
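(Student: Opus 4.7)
The plan is to handle (i) by applying the Moreau-type resolvent identity to both resolvents appearing in \eqref{drs}, and then to obtain (ii) by unpacking the matrix inclusion $\cQ b^k \in (\cA + \cQ) b^{k+1}$ row by row and matching each row against the three lines of \eqref{drs_4}.

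For part (i), the key tool is the Moreau decomposition $J_{\tau M}(z) = z - \tau J_{\frac{1}{\tau} M^{-1}}(z/\tau)$, valid for any maximally monotone $M$ on $\cH$ and any $\tau > 0$. Applied to $M = B$ at $z = z^k$, it gives $J_{\tau B}(z^k) = z^k - \tau u^{k+1}$, so the reflection becomes $2 J_{\tau B}(z^k) - z^k = z^k - 2\tau u^{k+1}$. Applied to $M = A$ at that reflected point, it gives $J_{\tau A}(z^k - 2\tau u^{k+1}) = (z^k - 2\tau u^{k+1}) - \tau s^{k+1}$. Substituting both into \eqref{drs} and collecting like terms collapses the right-hand side to $z^k - \tau(u^{k+1} + s^{k+1})$, which is precisely the third line of \eqref{drs_4}; the first two lines are the definitions of $u^{k+1}$ and $s^{k+1}$.

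For part (ii), I would write out $\cQ b^k \in (\cA + \cQ) b^{k+1}$ coordinate-wise. Since the last block row of $\cA + \cQ$ is single-valued, the third coordinate reads as the linear equation $u^{k+1} + s^{k+1} + \frac{1}{\tau} z^{k+1} = \frac{1}{\tau} z^k$, which rearranges exactly to the $z$-update in \eqref{drs_4}. Using this identity to eliminate $z^{k+1}$, the first coordinate inclusion becomes $B^{-1} u^{k+1} \ni z^{k+1} + \tau s^{k+1} = z^k - \tau u^{k+1}$, which by definition of the resolvent is equivalent to $u^{k+1} = J_{\frac{1}{\tau} B^{-1}}(z^k/\tau)$. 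Similarly, the second coordinate inclusion becomes $A^{-1} s^{k+1} \ni z^{k+1} - \tau u^{k+1} = z^k - 2\tau u^{k+1} - \tau s^{k+1}$, which is equivalent to $s^{k+1} = J_{\frac{1}{\tau} A^{-1}}\bigl((z^k - 2\tau u^{k+1})/\tau\bigr)$. These exactly reproduce the first two lines of \eqref{drs_4}.

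The proof is mostly bookkeeping; there is no serious analytic obstacle. The one point that requires care is the order of manipulation: in (ii) one should solve the third (single-valued) row first to eliminate $z^{k+1}$ before rearranging the two set-valued rows, since otherwise the right-hand sides of the $B^{-1}$- and $A^{-1}$-inclusions look unlike resolvent-defining equations. Once $z^{k+1}$ is substituted out, the correspondence with the Moreau-dualised scheme in (i) is transparent, and the equivalence of \eqref{drs_4} with the abstract PPA form \eqref{ppa} follows.
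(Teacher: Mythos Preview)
Your proposal is correct. The content is equivalent to the paper's argument, but you streamline both parts slightly. For (i), the paper introduces intermediate variables $x^{k+1}=J_{\tau B}(z^k)$ and $w^{k+1}=J_{\tau A}(2x^{k+1}-z^k)$, passes to the corresponding inclusions, and then substitutes $u^{k+1}=\frac{1}{\tau}(z^k-x^{k+1})$ and $s^{k+1}=\frac{1}{\tau}(2x^{k+1}-z^k-w^{k+1})$; you instead invoke the Moreau identity $J_{\tau M}=\cI-\tau J_{\frac{1}{\tau}M^{-1}}\circ(\frac{1}{\tau}\cI)$ directly, which is exactly what those substitutions encode but avoids the intermediate bookkeeping. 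For (ii), the paper works forward from the inclusion form of \eqref{drs_4} and assembles the block matrix, whereas you start from $\cQ b^k\in(\cA+\cQ)b^{k+1}$ and unpack it; since the claim is an equivalence, either direction suffices, and your observation that the single-valued third row should be resolved first before simplifying the set-valued rows is the right order of operations.
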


\begin{proof}
(i) Letting $x^{k+1}  :=  J_{\tau B} (z^k)$, and 
$ w^{k+1}   :=   J_{\tau A} (2x^{k+1} - z^k)$, the standard DRS \eqref{drs} can be developed as \cite[Eq.(21)]{vanden_2018}
\[
\left\lfloor \begin{array}{lll}
 x^{k+1} & := &  J_{\tau B } ( z^k), \\
  w^{k+1} &  := &   J_{\tau A} (2 x^{k+1} -  z^k), \\
 z^{k+1} & := &  z^k +  ( w^{k+1}  -   x^{k+1}),
\end{array} \right.
\]
which is equivalent to the inclusion form:
\[
\left\lfloor \begin{array}{lll}
B x^{k+1}   & \owns & \frac{1}{\tau}(z^{k} -  x^{k+1} ) ,  \\
A w^{k+1} & \owns & \frac{1}{\tau} (   2 x^{k+1} -  z^{k} -w^{k+1} ) , \\
z^{k+1} & =  &  z^k +  w^{k+1} -  x^{k+1}  .
 \end{array} \right.
\]
Substituting $u^{k+1}=\frac{1}{\tau} ( z^k- x^{k+1})  $ and $s^{k+1} = \frac{1}{\tau} ( 2 x^{k+1} -  z^{k} - w^{k+1}) $ into the above scheme, and removing $(x,w)$, we obtain
\be \label{w1}
\left\lfloor \begin{array}{lll}
B^{-1} u^{k+1} & \owns &  z^k-\tau u^{k+1}, \\
A^{-1} s^{k+1} & \owns &  z^{k} - 2  \tau u^{k+1}
- \tau s^{k+1} , \\ 
 z^{k+1} & =  &  z^k - \tau ( u^{k+1} + s^{k+1}),
 \end{array} \right.
\ee
which is equivalent to  \eqref{drs_4}.

\vskip.1cm
(ii) To express \eqref{w1} as the PPA form \eqref{ppa}, substituting the $z$-step into $ u$- and $s$-steps yields
\[
\left\{ \begin{array}{lll}
  0 &\in & B^{-1} u^{k+1}  +   \tau  u^{k+1} - z^k  
=  B^{-1} u^{k+1}  -  \tau s^{k+1} -   z^{k+1}, \\
  0 & \in &  A^{-1} s^{k+1}  + \tau  s^{k+1} - z^k +2\tau  u^{k+1}   =   A^{-1} s^{k+1}  -  z^{k+1} +  \tau   u^{k+1}.   
\end{array} \right. 
\]
Thus, \eqref{w1} can be rewritten as
\[
\begin{bmatrix}
  0 \\ 0 \\  0 \end{bmatrix} \in \begin{bmatrix}
B^{-1}   & -\tau I  & -I  \\
\tau I  &A^{-1}   &   -I \\
I    & I  &  0 \end{bmatrix}  
\begin{bmatrix}
u^{k+1} \\ s^{k+1} \\ z^{k+1}  
\end{bmatrix} +   \begin{bmatrix}
 0  &  0 &  0 \\  0 &  0    &  0 \\
  0 &  0  & \frac{1}{\tau}  I 
\end{bmatrix}  \begin{bmatrix}  u^{k+1} - u^k \\ 
s^{k+1} - s^k \\ z^{k+1}  - z^k
\end{bmatrix} .
\]
from which follow the notations of $b$, $\cA$ and $\cQ$.  \hfill
\end{proof}

Notice that the corresponding metric $\cQ$ in Lemma \ref{l_ppa}-(ii) is merely positive {\it semi-}definite, rather than {\it strictly} positive definite---a standard metric setting in classical PPA \cite{ppa_guler,rtr_1976,fxue_gopt}. This phenomenon is referred to as {\it degeneracy}, which  has recently been noticed and systematically studied in \cite{fxue_1,bredies_preprint}. The degenerate metric $\cQ$ here implies that the variables $(u,s)$ lying in $\ker\cQ$ are auxiliary and redundant that do not really take part in the iterative process \eqref{drs_4}. To see this, substituting $u$ and $s$-steps into $z$-step of \eqref{drs_4}, \eqref{drs} is exactly recovered, which is an iterative process of the only active variable $z^k$. It indicates that  $(u^k,s^k)$ are merely intermediate results of \eqref{drs_4}. Though $b\in \cH^3$ in the apparent PPA representation, the true dimension of the scheme  \eqref{drs_4} remains $\cH$. 

\vskip .1cm
Lemma \ref{l_ppa} performs the `{\it size expansion}' from $\cH$ of  \eqref{drs} to $\cH^3$ of \eqref{drs_4}\footnote{\eqref{drs_4} only increases the apparent size of \eqref{drs}, while keeping the actual dimension unchanged. Hence, we use `{\it size expansion}' rather than {\it dimension expansion}.}. The next result will reduce the apparent size $\cH^3$ of \eqref{drs_4} back to the original size $\cH$, based on a recent result of \cite[Theorem 2.13]{bredies_preprint}. After the `expansion+reduction' steps, we obtain an equivalent resolvent to DRS \eqref{drs} as follows.
\begin{proposition} \label{p_res}
The DRS scheme \eqref{drs} can be expressed as
\be \label{drs_eq}
v^{k+1} = (\cI+   \cK \cL^{-1} \cK^\top)^{-1} v^k, 
\ee
where $\cL = \begin{bmatrix}
B^{-1}  & -\tau I \\   \tau I  & A^{-1}  \end{bmatrix}: \cH \times \cH \mapsto 2^\cH \times 2^\cH$,
$\cK = \sqrt{\tau} \begin{bmatrix}  I & I \end{bmatrix}: \cH \times \cH \mapsto \cH$. Here, the variable $v \in \cH$ is linked to $x$ in \eqref{problem} and $z$ in \eqref{drs} via  $x^k = J_{\tau B} (z^k) =  J_{\tau B} ( \sqrt{\tau}  v^k) $.
\end{proposition}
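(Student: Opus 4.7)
The plan is to apply the reduction mechanism of \cite[Theorem 2.13]{bredies_preprint} to the degenerate PPA representation produced in Lemma \ref{l_ppa}-(ii). The degenerate metric there reads $\cQ = \cC\cC^\top$ with $\cC := \bigl(0,\,0,\,\tfrac{1}{\sqrt{\tau}}I\bigr)^\top : \cH \to \cH^3$, whose range coincides with the $z$-coordinate. The reduced variable is therefore $v^k := \cC^\top b^k = z^k/\sqrt{\tau}$, which encodes all of the truly active information in $b^k$; the appearance of the factor $\sqrt{\tau}$ in the link $x^k = J_{\tau B}(\sqrt{\tau} v^k)$ is already explained by this change of variables together with $x^{k+1} = J_{\tau B}(z^k)$ established in the proof of Lemma \ref{l_ppa}.

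To carry out the reduction concretely, I will write $0\in \cA b^{k+1} + \cQ(b^{k+1}-b^k)$ row by row. The third row recovers $z^{k+1}=z^k-\tau(u^{k+1}+s^{k+1})$, which is the only row where $\cQ$ contributes. Substituting it back into the first two rows eliminates $z^{k+1}$ entirely and leaves a system in $(u^{k+1},s^{k+1})$ alone, in which the right-hand side becomes $(z^k,z^k)^\top$. Recognising the left-hand side as $\cL(u^{k+1},s^{k+1})^\top$ and the additional $\tau(u^{k+1}+s^{k+1})$ contributions on both components as $\cK^\top\cK(u^{k+1},s^{k+1})^\top$, the system collapses to
\[
(\cL + \cK^\top \cK)\begin{bmatrix} u^{k+1} \\ s^{k+1}\end{bmatrix} \;\ni\; \cK^\top v^k .
\]

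From here a short algebraic rearrangement yields the resolvent form. Moving $\cK^\top \cK(u^{k+1},s^{k+1})^\top$ to the right and using the $z$-update $v^{k+1}=v^k-\cK(u^{k+1},s^{k+1})^\top$, one finds $\cL(u^{k+1},s^{k+1})^\top \ni \cK^\top v^{k+1}$, so $(u^{k+1},s^{k+1})^\top \in \cL^{-1}\cK^\top v^{k+1}$. Pre-multiplying by $\cK$ then gives $v^k-v^{k+1}\in \cK\cL^{-1}\cK^\top v^{k+1}$, which is exactly $v^k\in(\cI+\cK\cL^{-1}\cK^\top)v^{k+1}$, i.e.\ \eqref{drs_eq}.

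The routine bookkeeping above is not what is delicate. The main obstacle I expect is a well-posedness issue hidden in the formal ``Woodbury'' identity $(\cI+\cK\cL^{-1}\cK^\top)^{-1} = \cI - \cK(\cL+\cK^\top\cK)^{-1}\cK^\top$: since $\cL$ is set-valued and only its skew part makes it monotone rather than strongly monotone, one must argue that $\cL+\cK^\top\cK$ is actually invertible as a (single-valued) mapping on $\cH^2$, and that each step of the chain of inclusions is in fact an equality at $(u^{k+1},s^{k+1})$. This is precisely the content of \cite[Theorem 2.13]{bredies_preprint}, so the cleanest presentation is to verify that the factorisation $\cQ=\cC\cC^\top$ and the operator $\cA$ satisfy the hypotheses of that theorem, and then read off \eqref{drs_eq} as the reduced iteration it produces, using the explicit calculation above to identify the operators $\cL$ and $\cK$ in the statement.
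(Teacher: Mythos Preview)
Your proposal is correct and lands on the same identity as the paper, but the computational route differs. The paper first invokes \cite[Theorem 2.13]{bredies_preprint} to write the reduced iteration abstractly as $v^{k+1} = \bigl(\cI + (\cD^\top\cA^{-1}\cD)^{-1}\bigr)^{-1}v^k$, and then identifies $(\cD^\top\cA^{-1}\cD)^{-1}$ with $\cK\cL^{-1}\cK^\top$ by solving the block system $\cA\cR = \cD$ via the partition $\cA = \begin{bmatrix}\cL & -\cK^\top \\ \cK & 0\end{bmatrix}$; the factor $\sqrt{\tau}$ is absorbed into $\cK$ at the very end. You instead eliminate $(u,s)$ directly from the row-by-row inclusion, recognise $(\cL+\cK^\top\cK)(u^{k+1},s^{k+1})^\top \ni \cK^\top v^k$, and rearrange to reach $v^k\in(\cI+\cK\cL^{-1}\cK^\top)v^{k+1}$ without ever forming $\cD^\top\cA^{-1}\cD$. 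Your route is more elementary and transparent about where each term comes from; the paper's is more systematic and makes the dependence on the cited reduction theorem explicit from the outset. Both correctly defer the single-valuedness issue you flag to \cite[Theorem 2.13]{bredies_preprint}.
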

\begin{proof}
Lemma \ref{l_ppa}-(ii) presents $\cA$ and $\cQ$ of PPA corresponding to the DRS scheme \eqref{drs}. The metric $\cQ$ can be decomposed as
\[
\cQ = \cD \cD^\top = \begin{bmatrix}
0 \\ 0 \\ \frac{1}{\sqrt{\tau}} I \end{bmatrix} 
\begin{bmatrix}
0 & 0 & \frac{1}{\sqrt{\tau}} I \end{bmatrix},
\]
where $\cD: \cH \mapsto \{0\} \times \{0\} \times \cH$. 
 Let $v^k := \cD^\top b^k =   \frac{1}{\sqrt{\tau}} z^k$.  Applying \cite[Theorem 2.13]{bredies_preprint} to \eqref{ppa} yields the reduced PPA: 
\begin{eqnarray}
v^{k+1} &=& \cD^\top (\cA +\cD\cD^\top)^{-1} \cD v^k
\quad \text{(multiplying $D^\top$ on both sides of \eqref{ppa})}
\nonumber \\
&=& \big(\cI + (\cD^\top\cA^{-1} \cD)^{-1}\big)^{-1} v^k.
\quad \text{(by \cite[Theorem 2.13]{bredies_preprint})}
\nonumber
\end{eqnarray}
Now, we need to solve $\cD^\top\cA^{-1} \cD$. Denote $\cR: = \cA^{-1} \cD = \begin{bmatrix}
\cR_1 \\  \cR_2 \end{bmatrix}$, then, $\cD^\top  \cA^{-1} \cD = \frac{1}{\sqrt{\tau}} \cR_2$. To find $\cR_2$,  we
rewrite $\cA$ in Lemma \ref{l_ppa}-(ii) as $\cA = \begin{bmatrix}
\cL & -\cK^\top \\ \cK & 0 \end{bmatrix}$, where
 $\cL = \begin{bmatrix}
B^{-1}  & -\tau I \\ 
\tau I  & A^{-1}  \end{bmatrix}$,
$\cK = \begin{bmatrix}
I & I \end{bmatrix}$. Then,   $\cR$ satisfies
\[
\cA \cR = \cD \Longrightarrow 
  \begin{bmatrix}
\cL & -\cK^\top \\ \cK & 0 \end{bmatrix}
 \begin{bmatrix}
\cR_1 \\ \cR_2  \end{bmatrix} = 
\begin{bmatrix}
0 \\ \frac{1}{\sqrt{\tau}} I \end{bmatrix},
\]
i.e., $\cL \cR_1 = \cK^\top \cR_2$ and $\cK \cR_1 = \frac{1}{\sqrt{\tau}} \cI$. Substituting $\cR_1 = \cL^{-1} \cK^\top \cR_2$ into the second equation, we obtain  $ \cR_2=  (\cK  \cL^{-1} \cK^\top)^{-1} \circ (\frac{1}{\sqrt{\tau}}\cI)$.  Thus,
\[
(\cD^\top\cA^{-1}\cD)^{-1} =   
\big( \frac{1}{\sqrt{\tau}} \cR_2 \big)^{-1} = (\sqrt{ \tau}\cI) \circ  (\cK \cL^{-1} \cK^\top) \circ (\sqrt{ \tau}\cI)
=(\sqrt{ \tau}\cK) \circ \cL^{-1} \circ (\sqrt{\tau}\cK^\top).
\]
Merging the factor of $\sqrt{\tau}$ into $\cK$ completes the proof. \hfill 
\end{proof}

\subsection{Maximal monotonicity and cyclic monotonicity}
To investigate the monotone properties of  $\cK \cL^{-1} \cK^\top$ given in Proposition \ref{p_res}, let us first discuss the maximal and cyclic monotonicity under more general setting. 

We define the proximal mapping.
\begin{definition} {\rm \cite[Definition 1.22, Example 10.2]{rtr_book_2} } \label{def_prox}
The operator $\cT$ is a proximal mapping, if there exists a  proper, lower semi-continuous (l.s.c.) and convex function $f$, such that  $\cT = (\cI + \partial f)^{-1}$. 
\end{definition}

The following results will be used in Sect. \ref{sec_main}. 
\begin{lemma} \label{l_K}
Given a set-valued maximally monotone operator $\cS: \cH \mapsto 2^\cH$ and a linear surjective operator $\cP: \cH \mapsto \cH$, then, $\cP \cS \cP^\top$ is  maximally monotone.
\end{lemma}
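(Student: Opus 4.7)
My plan is to treat the two defining properties separately: monotonicity follows immediately by a direct inner-product manipulation, and maximality reduces, via Minty's surjectivity characterization, to a range condition on an auxiliary maximally monotone sum. For monotonicity, given $(x_i,y_i)\in\gra(\cP\cS\cP^\top)$, $i=1,2$, I pick selections $u_i\in\cS(\cP^\top x_i)$ with $y_i=\cP u_i$; moving $\cP$ to the other side of the inner product gives
\[
\langle y_1-y_2,\,x_1-x_2\rangle=\langle u_1-u_2,\,\cP^\top x_1-\cP^\top x_2\rangle\geq 0
\]
by monotonicity of $\cS$.

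For maximality, I would invoke Minty's theorem and show $\ran(\cI+\cP\cS\cP^\top)=\cH$. Given $w\in\cH$, substituting $x=w-\cP u$ together with $u\in\cS(\cP^\top x)$ turns the inclusion $w\in x+\cP\cS(\cP^\top x)$ into the equivalent auxiliary inclusion
\[
\cP^\top w\in(\cS^{-1}+\cP^\top\cP)(u).
\]
Now $\cP^\top\cP$ is a bounded self-adjoint positive semi-definite operator with full domain, so the standard sum rule yields that $\cS^{-1}+\cP^\top\cP$ is maximally monotone.

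The main obstacle is to conclude that the \emph{specific} element $\cP^\top w$ actually lies in the range of this sum (and not merely in its closure), since maximal monotonicity alone does not guarantee surjectivity of the bare operator. Here the surjectivity of $\cP$ plays the decisive role: it forces $\cP^\top$ to be bounded below, so that $\cP\cP^\top$ is a topological isomorphism and $\cP^\top\cP$ is coercive along $\ran\cP^\top=(\ker\cP)^\perp$. My plan is to regularise by $\varepsilon\cI$, obtain a unique solution $u_\varepsilon$ of the strongly monotone perturbed inclusion, use the coercivity along $\ran\cP^\top$ together with the monotonicity of $\cS^{-1}$ to extract uniform bounds on the projections of $u_\varepsilon$, and then pass to a weak cluster point via the demiclosedness of the graph of the maximally monotone sum $\cS^{-1}+\cP^\top\cP$ to produce the desired $u$, from which the $x$ solving Minty's equation is recovered as $x=w-\cP u$.
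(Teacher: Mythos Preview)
Your monotonicity argument is fine and matches the paper's. For maximality, however, you take a very different and much heavier route than the paper does. The paper simply verifies the definition: given $(x,u)$ monotonically related to every pair in $\gra(\cP\cS\cP^\top)$, it uses the surjectivity of $\cP$ to write $u=\cP u'$, rewrites $\langle x-y,\cP u'-\cP v'\rangle=\langle \cP^\top x-\cP^\top y,u'-v'\rangle$, and then invokes the maximality of $\cS$ directly to obtain $u'\in\cS\cP^\top x$. No Minty, no regularisation, no limits.

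Your Minty-based plan, by contrast, has a genuine gap at the limiting step. After reducing to $\cP^\top w\in(\cS^{-1}+\cP^\top\cP)(u)$ and regularising, you only get coercivity of $\cP^\top\cP$ on $\ran\cP^\top=(\ker\cP)^\perp$; the $\ker\cP$--component of $u_\varepsilon$ is not controlled at all by this term. Your monotonicity estimate therefore yields a uniform bound on $\|\cP u_\varepsilon\|$ (equivalently on the projection of $u_\varepsilon$ onto $\ran\cP^\top$) and on $\varepsilon\|u_\varepsilon\|^2$, but \emph{not} on $\|u_\varepsilon\|$ itself. Demiclosedness of the graph of the maximally monotone operator $\cS^{-1}+\cP^\top\cP$ lets you pass weak--strong (or weak--weak with an extra inner-product condition) limits only when the full sequence $(u_\varepsilon)$ is bounded; a bound on a projection is not enough to extract a weak cluster point $u$ lying in the graph. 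So the sentence ``pass to a weak cluster point via the demiclosedness of the graph \dots\ to produce the desired $u$'' does not go through as written. If you want to rescue this route you would need an additional device---for instance, composing with the normal cone of the closed subspace $\ran\cP^\top$, or working directly with $x_\varepsilon=w-\cP u_\varepsilon$ (which \emph{is} bounded) and showing weak closedness of $\gra(\cP\cS\cP^\top)$ along that sequence---rather than trying to obtain $u$ itself.
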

\begin{proof}
(i) The monotonicity of  $\cP\cS \cP^\top$ immediately follows by \cite[Proposition 20.10]{plc_book}. 

(ii) Check the maximality, i.e. does 
\[
\langle x-y | u- v \rangle \ge 0, \quad
\forall (y,v) \in \gra (\cP \cS \cP^\top),
\]
imply $u \in \cP \cS \cP^\top x$?

Since $\cP$ is surjective, $u$ can always be expressed as $u=\cP u'$ for some $u'$. Then, the above inequality becomes
\[
\langle x-y | \cP u'- \cP v' \rangle  
= \langle \cP^\top x- \cP^\top y |  u'-   v'  \rangle \ge 0, \quad
\forall (y, v') \in \gra ( \cS \cP^\top).
\]
The maximality of $\cS$ yields  $u'\in \cS \cP^\top x$, from which we conclude that  $u=\cK u' \in \cP \cS \cP^\top x$.
\hfill 
\end{proof}

\begin{lemma} \label{l_cyc}
Given a set-valued monotone operator $\cS: \cH\mapsto 2^\cH$, $\cS$ is maximally cyclically  monotone, if and only if $\cS^{-1}$ is maximally cyclically  monotone.
\end{lemma}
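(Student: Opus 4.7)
My plan is to first show that cyclic monotonicity itself is preserved under inversion, and then bootstrap to the maximality statement by contrapositive. By the symmetry $(\cS^{-1})^{-1} = \cS$, it will suffice to prove only one implication in each of these two steps.

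For the cyclic-monotonicity step, I would fix an arbitrary cycle $(u_1, x_1), \ldots, (u_n, x_n) \in \gra \cS^{-1}$, equivalently $(x_i, u_i) \in \gra \cS$, and apply the cyclic-monotonicity inequality of $\cS$ not to the forward cycle but to the \emph{reversed} cycle $(x_n, u_n), (x_{n-1}, u_{n-1}), \ldots, (x_1, u_1)$. After relabeling $y_i := x_{n+1-i}$ and $v_i := u_{n+1-i}$, a direct index shift on $\sum_{i=1}^n \langle y_{i+1} - y_i, v_i \rangle \leq 0$ (with $y_{n+1} = y_1$) plus one Abel-type rearrangement converts it into $\sum_{j=1}^n \langle u_{j+1} - u_j, x_j \rangle \leq 0$ (with $u_{n+1} = u_1$), which is exactly the cyclic-monotonicity inequality for $\cS^{-1}$ at the original tuple. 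Since the length $n$ and the cyclic points are arbitrary, this shows that $\cS$ cyclically monotone implies $\cS^{-1}$ cyclically monotone.

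For maximality, I would argue by contrapositive: if $\cS^{-1}$ admits a proper cyclically monotone extension $\cT \supsetneq \cS^{-1}$, then $\cT^{-1} \supsetneq \cS$ is cyclically monotone by the previous step, contradicting maximality of $\cS$; the reverse implication is identical with $\cS$ and $\cS^{-1}$ swapped. The only delicate point is bookkeeping---the cycle reversal must produce the inverse's inequality at the \emph{same} set of pairs (not a shifted or permuted one), otherwise the extension argument does not transfer; no deeper obstacle is expected. As a cross-check and arguably a one-line alternative, one may invoke Rockafellar's characterization: $\cS$ is maximally cyclically monotone iff $\cS = \partial f$ for some proper, lower semi-continuous, convex $f$, in which case $\cS^{-1} = (\partial f)^{-1} = \partial f^{\ast}$ with $f^{\ast}$ again proper, lower semi-continuous, convex, and the equivalence is immediate.
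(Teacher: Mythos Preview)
Your proof is correct. The cycle-reversal plus index-shift argument for preserving cyclic monotonicity under inversion is exactly what the paper does (the paper compresses it to one line: ``\eqref{a} can also be written as $\sum_{i=1}^n \langle u_{i-1}-u_i \,|\, x_i\rangle \le 0$'').

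The handling of maximality differs slightly. The paper invokes \cite[Proposition~20.22]{plc_book} to say that \emph{maximal monotonicity} is preserved under inversion, and then tacitly combines this with the cyclic-monotonicity step; this implicitly uses that ``maximally cyclically monotone'' is equivalent to ``maximally monotone and cyclically monotone'', which in turn rests on Rockafellar's theorem. Your contrapositive argument via a proper cyclically monotone extension $\cT\supsetneq \cS^{-1}$ is more direct and entirely self-contained---it needs only Step~1 and the trivial fact that $\gra\cT\supsetneq\gra\cS^{-1}$ iff $\gra\cT^{-1}\supsetneq\gra\cS$. Your alternative one-liner through $\cS=\partial f \Leftrightarrow \cS^{-1}=\partial f^\ast$ is also valid and is not used in the paper's proof of this lemma, though the paper does invoke \cite[Theorem~22.18]{plc_book} elsewhere.
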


\begin{proof}
By \cite[Proposition 20.22]{plc_book}, $\cS$ is  maximally monotone, if and only if $\cS^{-1}$ is maximally monotone.

Let us now check the cyclic monotonicity. First, suppose that $\cS$ is cyclically monotone. By \cite[Definition 22.13]{plc_book}, the cyclic monotonicity implies
\be \label{a}
\sum_{i=1}^n \langle x_{i+1} -x_i | u_i \rangle \le 0,\quad 
\forall u_i \in \cS x_i, \ \forall n \ge 2, 
\ee
where it is assumed that $x_{n+1} := x_1$.  \eqref{a} can also be written as $\sum_{i=1}^n \langle  u_{i-1} -   u_i | x_i \rangle \le 0$, where $u_0 := u_n$.  Since $x_i \in \cS^{-1} u_i$, this shows that $\cS^{-1}$ is also $n$-cyclic for any integer $n \ge 2$, just with reverse cyclic order of $u_i$. Likewise  the converse is also true.
 \hfill  
\end{proof}

\vskip.1cm
Observe that  $\cL$ defined in Proposition \ref{p_res} in spirit has the following structure: 
\be \label{L}
\cS = \begin{bmatrix}
A  & - C^\top \\ C  & B \end{bmatrix}: \cH_1 \times \cH_2
\mapsto 2^{\cH_1} \times 2^{\cH_2}.
\ee 
Now we discuss the properties of $\cS$ under 
\begin{assumption} \label{assume}
\begin{itemize}
\item[\rm (i)]  $A:\cH_1\mapsto 2^{\cH_1}$ and $B:\cH_2\mapsto 2^{\cH_2}$ are  monotone operators;
\item[\rm (ii)] $C:\cH_1 \mapsto \cH_2$ is a linear operator. 
\end{itemize}
\end{assumption}
This {\it monotone}+{\it skew} type of $\cS$:
\[
\cS =  \underbrace{ \begin{bmatrix}
A  & 0 \\ 0  & B \end{bmatrix}}_\text{monotone}
+ \underbrace{ \begin{bmatrix}
0  & - C^\top \\ C  & 0 \end{bmatrix} }_\text{skew}
\]
is often encountered in many operator splitting algorithms, e.g.,  \cite{arias_2011,bredies_2017,fxue_gopt,vanden_2014}. 

\begin{lemma} \label{l_max}
Under Assumption \ref{assume}, the operator $\cS$ in \eqref{L} is maximally monotone,  if $A$ and $B$ are maximally monotone.
\end{lemma}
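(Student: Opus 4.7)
The plan is to decompose $\cS$ as indicated in the paper just before the lemma, namely
\[
\cS = \cM + \cN, \qquad
\cM := \begin{bmatrix} A & 0 \\ 0 & B \end{bmatrix}, \qquad
\cN := \begin{bmatrix} 0 & -C^\top \\ C & 0 \end{bmatrix},
\]
and then invoke a standard sum theorem for maximal monotonicity. So the argument reduces to three routine checks plus one appeal to a well-known result from \cite{plc_book}.

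First, I would argue that $\cM$ is maximally monotone on $\cH_1\times\cH_2$. This is a standard fact about Cartesian products of maximally monotone operators: since $A$ and $B$ are maximally monotone, the graph of $\cM$ is exactly $\gra A \times \gra B$ (after reordering of coordinates), and it is straightforward to verify directly from the definition that monotonicity is preserved under direct sums and that no proper monotone extension is possible (any enlargement would restrict to an enlargement of $A$ or $B$). One can cite \cite[Proposition 20.23]{plc_book} for this.

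Next, I would check that $\cN$ is maximally monotone. Since $C:\cH_1\to\cH_2$ is bounded linear, so is $\cN$, and $\dom\cN = \cH_1\times\cH_2$. Monotonicity is immediate: for $(x,y)\in\cH_1\times\cH_2$, $\langle (x,y) \mid \cN(x,y)\rangle = \langle x \mid -C^\top y\rangle + \langle y \mid Cx\rangle = 0$, so $\cN$ is in fact skew. Any continuous monotone linear operator with full domain is maximally monotone (see, e.g., \cite[Example 20.35]{plc_book}).

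Finally, I would invoke the classical sum theorem for maximally monotone operators: if one of two maximally monotone operators has full domain (and is even single-valued and continuous), the sum is again maximally monotone. Concretely, since $\dom\cN = \cH_1\times\cH_2$ trivially satisfies any standard constraint qualification (e.g.\ $\dom\cM \cap \mathrm{int}\,\dom\cN \ne \emptyset$), \cite[Corollary 25.5]{plc_book} (Rockafellar's sum theorem) yields that $\cS = \cM + \cN$ is maximally monotone.

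There is no real obstacle here; the only thing to watch is which precise formulation of the sum theorem to cite, since $\cN$ being everywhere defined and continuous makes the constraint qualification automatic and in fact one can avoid the full strength of Rockafellar's theorem—an elementary direct verification using the fact that $\cN$ is single-valued and continuous also works, mirroring the argument style of Lemma~\ref{l_K}. I would therefore keep the proof to a few lines by citing the sum theorem directly.
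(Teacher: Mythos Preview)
Your proof is correct, but it follows a different route from the paper's. The paper verifies maximality directly from the definition: assuming $\langle x-y\mid u-w\rangle\ge 0$ for all $(y,w)\in\gra\cS$, it specializes to $y_1=x_1$ (respectively $y_2=x_2$) to isolate the $B$-component (respectively the $A$-component), and then invokes the maximality of $B$ and $A$ separately to conclude $u\in\cS x$. This is essentially the elementary ``skew perturbation'' argument carried out componentwise, and it avoids any appeal to a sum theorem.

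Your approach instead packages the result as $\cS=\cM+\cN$ with $\cM$ a Cartesian product of maximally monotone operators and $\cN$ a bounded skew linear map, and then cites the sum rule. This is the standard route in the monotone-plus-skew literature (e.g.\ Brice\~no-Arias--Combettes), and it is cleaner and more modular; the price is that you import a nontrivial theorem where the paper stays self-contained. As you yourself note at the end, because $\cN$ is skew one has $\langle x-y\mid \cN x-\cN y\rangle=0$, so the maximality of $\cM+\cN$ can in fact be checked in two lines without the full Rockafellar machinery---and that two-line argument is precisely what the paper's componentwise computation amounts to. One small caveat: you assert that $C$ is bounded, whereas Assumption~\ref{assume} only says ``linear''; this is harmless since the paper uses $C^\top$ throughout and clearly intends a bounded operator, but it is worth flagging.
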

\begin{proof}
(i) It is easy to verify the monotonicity of $\cS$. 

(ii) Regarding the maximality, for every fixed $(x,u)$, let us 
examine if the following statement holds:
\[
\forall (y,w) \in \gra \cS,  \quad 
\langle x-y| u -w \rangle  \ge 0
\Longrightarrow u\in \cS x.
\]
Take $x=(x_1,x_2) \in \cH_1 \times \cH_2$ and $u=(u_1,u_2)\in \cH_1 \times \cH_2$. For any fixed $y=(y_1,y_2) \in \cH_1 \times \cH_2$, pick arbitrary $v=(v_1,v_2) \in Ay_1 \times By_2$, and correspondingly, let $w=(w_1,w_2) = (v_1-C^\top y_2, Cy_1 +v_2)$, such that $w\in \cS y$. Then, the above inequality becomes
\be \label{qq}
\langle x-y| u- w \rangle = \langle x_1-y_1| 
u_1-(v_1- C^\top y_2) \rangle +  \langle x_2-y_2| 
u_2-(C y_1+ v_2) \rangle \ge 0.
\ee
Since \eqref{qq} holds for any $y=(y_1,y_2)$, taking $y_1=x_1$ in \eqref{qq} yields
\[
\langle x_2-y_2| u_2-(C x_1+v_2) \rangle 
= \langle x_2-y_2| u_2-C x_1 - v_2  \rangle \ge 0,\ 
\forall (y_2,v_2) \in \gra B. 
\]
Due to the maximality of $B$, we conclude  $u_2-C x_1  \in Bx_2$, i.e., $u_2  \in Bx_2 + C x_1$.

Similarly, taking $y_2=x_2$ in \eqref{qq} yields
\[
 \langle x_1-y_1| u_1-(v_1-C^\top x_2) \rangle   
= \langle x_1-y_1| u_1+ C^\top x_2 - v_1  \rangle  \ge 0 ,\ 
\forall (y_1,v_1) \in \gra A. 
\]
Due to the maximality of $A$, we have   $u_1 \in Ax_1 - C^\top x_2$. Both $u_1$ and $u_2$ imply  $u \in \cS x$. The proof is completed.  \hfill 
\end{proof}

\vskip.1cm
\begin{proposition} \label{p_not}
Given $\cS$ as \eqref{L} under Assumption \ref{assume}, the following hold.
\begin{itemize}
\item[\rm (i)] $\cS$ is cyclically  monotone, if $A$ and $B$ are cyclically monotone, $C=0$;

\item[\rm (ii)] $\cS$ is maximally cyclically  monotone, if $A$ and $B$ are maximally cyclically monotone, $C=0$;

\item[\rm (iii)] $\cS$ with $C \ne 0$ is  not maximally cyclically monotone;

\item[\rm (iv)] $\cS$ with $C \ne 0$ is  not cyclically monotone, if $A$ and $B$ are maximally cyclically monotone.
\end{itemize}
\end{proposition}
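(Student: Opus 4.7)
My plan is to treat (i)--(ii) via the block-diagonal decomposition at $C=0$ and (iii)--(iv) via a two-path line-integral argument that forces the skew off-diagonal block to vanish.

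For (i), writing $x_i = (x_i^{(1)}, x_i^{(2)})$ and $u_i = (p_i, q_i) \in \cS x_i$ with $p_i \in A x_i^{(1)}$ and $q_i \in B x_i^{(2)}$, the hypothesis $C = 0$ makes the $n$-cyclic sum split as
\[
\sum_{i=1}^n \langle x_{i+1} - x_i | u_i \rangle = \sum_{i=1}^n \langle x_{i+1}^{(1)} - x_i^{(1)} | p_i \rangle + \sum_{i=1}^n \langle x_{i+1}^{(2)} - x_i^{(2)} | q_i \rangle,
\]
and each piece is $\le 0$ by cyclic monotonicity of $A$ and $B$. For (ii), Rockafellar's theorem on maximally cyclically monotone operators furnishes proper, l.s.c., convex $f,g$ with $A = \partial f$ and $B = \partial g$. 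Setting $F(x_1, x_2) := f(x_1) + g(x_2)$ gives $\cS = \partial F$, and since $F$ is itself proper, l.s.c., and convex, the converse direction of Rockafellar's theorem returns maximal cyclic monotonicity of $\cS$.

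For (iii), I argue by contradiction. Assume $\cS$ is maximally cyclically monotone, so $\cS = \partial F$ for some proper, l.s.c., convex $F$. After a translation we may assume $(0,0) \in \mathrm{int}(\dom F)$, and we compute $F(x_1, x_2) - F(0,0)$ via the path-integral formula $F(y) - F(z) = \int_0^1 \langle \gamma'(t) | u(t) \rangle\,\rd t$ with measurable $u(t) \in \partial F(\gamma(t))$, along two L-shaped routes:
\[
\gamma_1: (0,0) \to (x_1, 0) \to (x_1, x_2), \qquad \gamma_2: (0,0) \to (0, x_2) \to (x_1, x_2).
\]
Set $\Phi_A(x_1) := F(x_1, 0) - F(0, 0)$ and $\Phi_B(x_2) := F(0, x_2) - F(0, 0)$, both determined by the $A$- and $B$-components of $\partial F$ along the axes. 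On the second leg of $\gamma_1$ the second-coordinate selection is drawn from $B(sx_2) + C x_1$, contributing $\Phi_B(x_2) + \langle C x_1 | x_2 \rangle$; on the second leg of $\gamma_2$ the first-coordinate selection is drawn from $A(t x_1) - C^\top x_2$, contributing $\Phi_A(x_1) - \langle C x_1 | x_2 \rangle$. Equating the two expressions for $F(x_1, x_2) - F(0, 0)$ forces $2\langle C x_1 | x_2 \rangle = 0$ for all $(x_1, x_2)$, i.e., $C = 0$, contradicting $C \ne 0$.

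For (iv), the additional hypothesis on $A, B$ implies their maximal monotonicity, so $\cS$ is maximally monotone by Lemma~\ref{l_max}. If $\cS$ were cyclically monotone, Rockafellar's extension theorem gives $\cS \subseteq \partial F$ for some proper, l.s.c., convex $F$, and maximality of $\cS$ upgrades this inclusion to $\cS = \partial F$; the argument of (iii) then delivers the contradiction. The main technical concern is the applicability of the path-integral formula along the chosen segments, which requires them to lie in $\mathrm{int}(\dom F)$---where $F$ is locally Lipschitz---together with a standard measurable selection of $\partial F$. In finite dimensions one can alternatively invoke Alexandrov's theorem at a twice-differentiable point $y$ of $F$, where the Jacobian of $\cS$ must equal the symmetric Hessian $\nabla^2 F(y)$; matching the off-diagonal blocks $-C^\top$ and $C$ immediately gives $C = 0$.
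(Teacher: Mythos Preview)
Your proofs of (i), (ii), and (iv) are essentially the same as the paper's, modulo cosmetic differences (the paper obtains (ii) by combining Lemma~\ref{l_max} with (i) rather than by exhibiting $F=f\oplus g$, and it phrases (iv) as ``maximally monotone $+$ not maximally cyclically monotone $\Rightarrow$ not cyclically monotone'', which is exactly your extension-theorem argument unpacked).

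For (iii) your route is genuinely different from the paper's, and it carries a real gap. The paper argues purely at the level of the function representation: writing $\cS=\partial f$, it sets $\tilde f(a,b):=f(a,b)+\langle a\mid C^\top b\rangle$ so that $\partial_a\tilde f(a,b)=Aa$, then splits into two cases according to whether $A$ is maximally cyclically monotone; in the affirmative case, uniqueness of the antiderivative forces $\tilde f(a,b)=f_1(a)+g(b)$, and comparing $\partial_b f$ with $Ca+Bb$ yields $\partial_b g(b)=Bb+2Ca$, which is impossible for $C\ne 0$. This manipulation never needs $\dom f$ to have interior, nor any line-integral formula.

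Your path-integral argument, by contrast, hinges on the assumption that after a translation $(0,0)\in\mathrm{int}(\dom F)$, so that $F$ is locally Lipschitz along both L-shaped paths and the integral representation $F(y)-F(z)=\int_0^1\langle\gamma'(t)\mid u(t)\rangle\,\rd t$ is available. Under Assumption~\ref{assume} alone, nothing guarantees $\mathrm{int}(\dom F)\ne\emptyset$: take for instance $\cH_1=\cH_2=\R^2$, $A=N_{\{0\}}$, $B=0$, and any $C\ne 0$; then $\dom\cS=\{0\}\times\R^2$ has empty interior in $\R^4$, and so does $\dom F$ for any $F$ with $\partial F=\cS$. Your Alexandrov-theorem alternative suffers from the same defect, since twice-differentiability of $F$ also requires an interior point. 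You flag this as ``the main technical concern'' but do not resolve it; as written, your argument proves (iii) only under the extra hypothesis that $\dom A\times\dom B$ has nonempty interior (and, for the path-integral version, really only in finite dimensions where a.e.\ differentiability along segments is available). To cover the general statement you would need either to reduce to that case or to switch to a function-level argument like the paper's that does not rely on local Lipschitz continuity.
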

\begin{proof}
(i) If $C=0$, taking $x_i=(a_i, b_i)$ for $i=1,...,n$, and picking arbitrary  $u_i =(c_i, d_i) \in Aa_i \times Bb_i = \cS x_i$ for each $i$, we develop, by the definition \eqref{L}, that
\[
\sum_{i=1}^n \langle x_{i+1} -x_i | u_i \rangle 
= \sum_{i=1}^n \langle a_{i+1} -a_i | c_i \rangle 
+ \sum_{i=1}^n \langle b_{i+1} -b_i | d_i \rangle,
\]
which is non-positive, since both terms above are non-positive due to the cyclic monotonicity of $A$ and $B$.

\vskip.2cm
(ii): in view of Lemma \ref{l_max} and (i).

\vskip.2cm
(iii) Let us prove it by contradiction. If $\cS$ is maximally cyclically monotone, then, by \cite[Theorem 22.18]{plc_book}, there exists a proper, l.s.c. and convex function $f: \cH_1 \times \cH_2 \mapsto \R\cup \{+\infty\}$ such that $\cS = \partial f$. That is, for every point $(a,b) \in \cH_1 \times \cH_2$, it holds that
\be \label{w11} 
\cS(a,b) = \begin{bmatrix}
Aa-C^\top b \\ Ca+B b   \end{bmatrix}
 = \begin{bmatrix}
\partial_a f(a,b) \\  \partial_b f(a,b)  \end{bmatrix}.
\ee
where $\partial_a$ and $\partial_b$ denote the subdifferentials w.r.t. $a$ and $b$, respectively. The first line of \eqref{w11}, i.e., $\partial_a f(a,b) = Aa-C^\top b$, implies that
$f(a,b) = \tilde{f}(a,b)-\langle a|C^\top b\rangle$, where $\partial_a \tilde{f} (a,b) = Aa$. By assumption, $\tilde{f}(a,b)$ also has to be proper, l.s.c. and convex. Let us then discuss it case-by-case.

\vskip.1cm
{\bf Case-I:} If $A$ is not maximally cyclically monotone, then, there does not exist a proper, l.s.c. and convex function $\tilde{f}(a,b)$, such that $\partial_a \tilde{f} (a,b) = Aa$.

\vskip.1cm
{\bf Case-II:} If $A$ is maximally cyclically monotone, then,  there  exists a proper, l.s.c. and convex function $f_1:\cH_1\mapsto \R\cup\{+\infty\}$, such that $\partial_a f_1(a) = Aa$. Also note $\partial_a \tilde{f} (a,b) = Aa$. By \cite[Proposition 22.19]{plc_book}, we have $\tilde{f}(a,b)=f_1(a)+g(b)$, where $g(b)$ is  viewed as a constant w.r.t. the variable $a$ (i.e., independent of $a$). Then, 
\be \label{q2}
f(a,b) = f_1(a)+g(b)-\langle a|C^\top b\rangle.
\ee
On the other hand, the second line of \eqref{w11} also requires that $\partial_b f(a,b)=Ca+Bb$. In view of  \eqref{q2}, we have $\partial_b g(b) = Bb+2Ca$, which is clearly impossible unless $C=0$,  since $g(b)$ is a function of variable $b$ only, being independent of $a$. A contradiction is reached.

\vskip.1cm
Finally, for both cases,  there is no any  proper, l.s.c. and convex function $f$, such that $\partial f = \cS$, when $C\ne 0$. Applying \cite[Theorem 22.18]{plc_book} again, $\cS$ is not maximally cyclically monotone.

\vskip.2cm
(iv) If $A$ and $B$ are maximally cyclically monotone, $C\ne 0$, $\cS$ is maximally monotone by  Lemma \ref{l_max}. On the other hand,  Proposition \ref{p_not}-(iii) claims that $\cS$ is not maximally cyclically monotone. Thus, $\cS$ is not cyclically monotone.
\hfill 
\end{proof}

\begin{remark}
Regarding Proposition \ref{p_not}-(iv), if  $A=0$ and $B=0$ (which certainly satisfies maximal cyclic monotonicity), it is easy to show a counterexample for which  $\cS$ (with $C\ne 0$) is not 3-cyclic. Denote $x_i=(a_i,b_i)$ for $i=1,2,3$, then, $u_i=\cS x_i = (-C^\top b_i, Ca_i)$. In this case, noting  $x_4=x_1$ (due to 3-cyclic), simple algebra gives
\[
\sum_{i=1}^3 \langle x_{i+1} -x_i | u_i \rangle 
=    \langle C a_1 | b_2-b_3 \rangle
+\langle C a_2 | b_3-b_1 \rangle
+\langle C a_3 | b_1-b_2 \rangle : = \xi. 
\]
Taking $x_1=(a_1,b_1)$, $x_2=(a_2,b_2)=(-C^\top b_1, Ca_1)$  and $x_3=(a_3,b_3)=(-C^\top C a_1, -CC^\top b_1)$, we obtain 
\[
\xi = \|Ca_1\|^2 +\|C^\top C a_1\|^2 + \|C^\top b_1\|^2 +\|CC^\top b_1\|^2.
\]
Since the linear operator $C \ne 0$, then, $\cH_1 \backslash \ker C \ne \emptyset$. It is easy to choose $a_1 \in \cH_1 \backslash \ker C$, such that $\|Ca_1\|^2 >0$, and hence, $\xi >0$. This  shows that $\cS$ is not 3-cyclic. 
\end{remark}

\section{Main result} \label{sec_main} 
We are now ready to present the main result.
\begin{theorem} \label{t_drs}
The DRS scheme \eqref{drs}

\begin{itemize}
\item [\rm (i)] is a well-defined resolvent;
\item [\rm (ii)]  fails to be a proximal mapping, unless $\cH = \R$.
\end{itemize}
\end{theorem}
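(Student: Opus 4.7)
The plan handles the two claims separately. For (i), I would use the representation $T_{\rm DRS}=(\cI+\cK\cL^{-1}\cK^\top)^{-1}$ from Proposition \ref{p_res} and show that $\cM:=\cK\cL^{-1}\cK^\top$ is maximally monotone, which makes its resolvent well-defined. Since $A,B$ are maximally monotone, so are their inverses, and $\cL$ fits the template \eqref{L} of Lemma \ref{l_max} under the identifications ``$A$''$\,\leftarrow\,B^{-1}$, ``$B$''$\,\leftarrow\,A^{-1}$, ``$C$''$\,\leftarrow\,\tau I$; this gives maximal monotonicity of $\cL$ and hence of $\cL^{-1}$. Because $\cK=\sqrt{\tau}\,[I\ I]:\cH\times\cH\to\cH$ is surjective (for any $v\in\cH$, $\cK(v/\sqrt{\tau},0)=v$), Lemma \ref{l_K} applied with $\cS=\cL^{-1}$ and $\cP=\cK$ delivers the maximal monotonicity of $\cM$.

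For (ii), I would argue via the classification behind Definition \ref{def_prox}: DRS is a proximal mapping if and only if $\cM$ is maximally cyclically monotone (MCM). The structural obstruction lies in $\cL$, whose monotone+skew form \eqref{L} has $C=\tau I\ne 0$, so Proposition \ref{p_not}(iii) shows $\cL$ is not MCM, and Lemma \ref{l_cyc} transports this failure to $\cL^{-1}$. This alone does not immediately imply $\cM$ is non-MCM, since the outer compression by $\cK$ and $\cK^\top$ could \emph{a priori} restore cyclic monotonicity; unpacking the cycle inequality for $\cM$ gives $\sum_i\langle v_{i+1}-v_i,\,\sqrt{\tau}(p_i+q_i)\rangle=\sum_i\langle\cK^\top v_{i+1}-\cK^\top v_i,\,(p_i,q_i)\rangle$ with $(p_i,q_i)\in\cL^{-1}(\sqrt{\tau}v_i,\sqrt{\tau}v_i)$, i.e., the cyclic inequality for $\cL^{-1}$ restricted to the diagonal subspace $\{(v,v):v\in\cH\}$, which is strictly weaker than MCM of $\cL^{-1}$ itself.

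To finish I would specialize to linear $A$ and $B$, so that $\cM$ becomes a linear operator on $\cH$ whose MCM is equivalent to its matrix being symmetric and positive semidefinite. A block-inverse computation then shows that the skew block $\tau I$ of $\cL$ survives the compression to contribute a nonzero antisymmetric term in $\cM$ as soon as $\cH$ admits a nontrivial skew-adjoint operator. Concretely, taking $B$ a positive multiple of $I$ and $A$ to include a $\pi/2$-rotation on a two-dimensional subspace of $\cH$ (which requires $\dim\cH\ge 2$) yields an explicit 3-cycle with $\sum_i\langle v_{i+1}-v_i,m_i\rangle>0$, contradicting MCM of $\cM$ and thereby ruling out any proximal representation. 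For $\cH=\R$ no nontrivial skew operator exists, the same calculation returns $\cM$ as a nonnegative scalar (the subdifferential of a quadratic), and DRS does reduce to a proximal mapping; this is why the exception $\cH=\R$ is necessary. The main obstacle is precisely controlling the compression step $\cK(\cdot)\cK^\top$, and the explicit linear counterexample appears to be the cleanest way to rule out MCM of $\cM$ uniformly in dimension at least two.
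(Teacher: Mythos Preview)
Your argument for (i) is the same as the paper's: reduce to \eqref{drs_eq}, verify that $\cL$ fits the template \eqref{L}, apply Lemma~\ref{l_max} and Lemma~\ref{l_K}, and conclude via Minty.

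For (ii) the paper takes a much shorter route than you do. After noting (via Proposition~\ref{p_not}(iii) and Lemma~\ref{l_cyc}) that $\cL$ and $\cL^{-1}$ are not maximally cyclically monotone, the paper simply asserts that ``neither is $\cK\cL^{-1}\cK^\top$, by Lemma~\ref{l_K}'', and is done. You, by contrast, correctly point out that Lemma~\ref{l_K} only transfers \emph{maximal} monotonicity, not \emph{cyclic} monotonicity, and that the cyclic inequality for $\cM=\cK\cL^{-1}\cK^\top$ is only the cyclic inequality for $\cL^{-1}$ restricted to the diagonal $\{(v,v):v\in\cH\}=\ran\cK^\top$; in principle the compression could restore cyclicity. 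This observation is sharp---the paper does not justify this passage---and it is what forces you onto a different path: a concrete linear choice of $A,B$ (with a genuine skew part requiring $\dim\cH\ge 2$) for which one can exhibit a $3$-cycle violating cyclicity of $\cM$. That is a legitimate way to establish the statement under the reading ``DRS \emph{generally} fails to be a proximal mapping'', which is the reading the title and abstract support.

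Two points where your proposal is weaker than the paper. First, your counterexample gives only the existential conclusion (there exist $A,B$ for which DRS is not proximal), whereas the paper's wording and proof aim at the universal conclusion for arbitrary maximally monotone $A,B$; if you intend the universal statement, your explicit construction does not deliver it. Second, your treatment of the exception $\cH=\R$ is too narrow: you argue that for your \emph{linear} family the skew part vanishes in one dimension. The paper instead invokes the general fact that every maximally monotone operator on $\R$ is automatically maximally cyclically monotone, hence a subdifferential, which covers \emph{all} admissible $A,B$ on $\R$ and shows that DRS is always a proximal mapping there. You should use that argument rather than the linear special case.
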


\begin{proof}
(i) By Proposition \ref{p_res}, the DRS scheme \eqref{drs} is equivalent to \eqref{drs_eq}. Since $A$ and $B$ are assumed to be maximally monotone in the problem setting \eqref{problem},   $A^{-1}$ and $B^{-1}$ is also maximally monotone by \cite[Proposition 20.22]{plc_book}.
Thus, $\cL$ has exactly the same structure as $\cS$ given in \eqref{L}. Lemma \ref{l_max} shows that $\cL$ defined in \eqref{drs_eq} is maximally monotone, and so is $\cL^{-1}$. Since $\cK= \sqrt{\tau} \begin{bmatrix}  I & I \end{bmatrix}$ is  surjective, $\cK \cL^{-1} \cK^\top$ in \eqref{drs_eq} is  maximally monotone by 
Lemma \ref{l_K}. Thus, by the well-known Minty's theorem \cite[Theorem 21.1]{plc_book}, \eqref{drs_eq} is single-valued and well-defined everywhere.

(ii) By Definition \ref{def_prox} and \cite[Theorem 22.18]{plc_book}, a  monotone operator associated to proximal mapping has to be maximally cyclically monotone.  Proposition \ref{p_not}-(iii) shows that $\cL$ given in  \eqref{drs_eq}  is not maximally cyclically monotone, and neither are  $\cL^{-1}$ and  $\cK \cL^{-1} \cK^\top$, by Lemmas \ref{l_cyc} and \ref{l_K}.  This concludes the proof, by noting the only exception of the case of $\cH = \R$. In such a special case, $\cK \cL^{-1} \cK^\top$ is maximally cyclically monotone by \cite[Theorem 22.22]{plc_book}, and there exists a proper, l.s.c.  and convex function $f$ defined on $\R$, such that $\cK \cL^{-1} \cK^\top =\partial f$ \cite[Corollary 22.23]{plc_book}.  \hfill 
\end{proof}

\begin{remark}
We extend the work of \cite{bau_fail} in several aspects. 
\begin{itemize}
\item The discussions of \cite{bau_fail} are based on a finite-dimensional space (in order to obtain \cite[Proposition 2.6]{bau_fail}), which is extended to infinite dimensional setting here.
\item Our expositions take a completely different technical route from \cite{bau_fail}: the  central notion of \cite{bau_fail} is symmetry, while our analysis heavily relies on cyclic monotonicity.
\item \cite{bau_fail} only concludes that the DRS operator is a resolvent of some maximally monotone operator, but fails to develop its concrete form. \cite[Theorem 3.1]{bau_fail} proves the failure of DRS to be a proximal mapping by contradiction, which shows that the set of  the maximally cyclically  monotone operators $A$ and $B$ (restricted to the linear relations), such that the DRS is a proximal mapping, does not contain any elements in its interior. By contrast, we give an explicit form of the maximally monotone operator associated with the DRS, and prove the general\footnote{The word `{\it general}' means that there exists an exceptional case of $\cH = \R$.} impossibility of DRS as being  a proximal mapping in a very straightforward way.
\item Most importantly, we extend the linear relations imposed on  $A$ and $B$ in \cite{bau_fail} to more general subdifferential operators (i.e., maximally cyclically  monotone operators by \cite[Theorem 22.18]{plc_book}), and give an affirmative answer to the first open question posed in \cite[Remark 3.3]{bau_fail}.
\item Finally, note that the only exceptional case of $\cH=\R$ in Theorem \ref{t_drs}-(ii) has also been considered in \cite[Remark 3.2]{bau_fail}.
\end{itemize}
\end{remark}

\section{Extended discussions}
\label{sec_dis}
Recalling that the {\it parallel composition of $\cL$ by $\cK$} is defined as $\cL \triangleright \cK: = (\cK \cL^{-1} \cK^\top)^{-1}$ \cite{arias_parallel}, $\cK \cL^{-1} \cK^\top$ that appears in \eqref{drs_eq} can be denoted as  $(\cL \triangleright \cK)^{-1}$, and thus, \eqref{drs_eq} can be rewritten as $v^{k+1} : =J_{(\cL \triangleright \cK)^{-1}} (v^k)$. Based on the Moreau's decomposition identity (see for instance \cite[Eq.(13)]{vanden_2018}), \eqref{drs_eq} can also be expressed in terms of the resolvent of parallel composition:  $v^{k+1} : = v^k - J_{ \cL \triangleright \cK  } (v^k)$. Interested readers may refer to \cite{arias_parallel,moudafi_resolvent,fukushima} for more properties and computing methods of the resolvent of the parallel composition.  However, Proposition \ref{p_not} shows that $\cL$ given in \eqref{drs_eq} is not maximally cyclically monotone, due to its non-zero off-diagonal elements. Consequently, $\cL \triangleright \cK$ cannot be further simplified as a parallel sum of two maximally monotone operators.

One can compare Lemma  \ref{l_K} with \cite[Proposition 3.1]{fukushima}. From the deductions of \cite[Proposition 3.1]{fukushima}, we obtain that $J_{ (\cL \triangleright \cK)^{-1}}$ of \eqref{drs_eq} can also be written as
\[
J_{ (\cL \triangleright \cK)^{-1}}
= \cI - \cK (\cL +  \cK^\top\cK)^{-1} \cK^\top.
\]
\cite[Proposition 3.1]{fukushima} shows that the resolvent is defined anywhere, if $\cK^\top \cK$ is an isomorphism. Consider our context, where $\cK^\top \cK = \tau \begin{bmatrix}
\cI & \cI \\ \cI & \cI \end{bmatrix}$, which is obviously not isomorphic (since it is degenerate and non-surjective). Our Theorem \ref{t_drs}-(i) shows that this case is also a well-defined resolvent, even if $\cK^\top \cK$ is not isomorphic. This is an extension of  \cite[Proposition 3.1]{fukushima}. For this degenerate case, one may refer to the most recent works of \cite{arias_2022,arias_parallel} for detailed analysis.  
 
Many nonexpansive properties of the DRS scheme \eqref{drs} can be immediately obtained by Theorem \ref{t_drs}-(i), without using the notion of reflected resolvent, as \cite{hbs_2015,boyd_control} did. This result also greatly simplifies the analysis of the relaxed DRS: $z^{k+1} := z^k+\gamma(\ztilde^k -z^k) $, where $\ztilde^k$ is the output of the standard DRS \eqref{drs}. Using $v^k=\frac{1}{\sqrt{\tau}} z^k$, we develop
\begin{eqnarray}
v^{k+1} &:=& v^k+\gamma (\vtilde^k - v^k)
\nonumber \\
&=& v^k +\gamma \big( J_{ (\cL \triangleright \cK)^{-1}} (v^k)  -v^k \big) 
\nonumber \\
&=& \big( (1-\gamma) \cI + \gamma J_{ (\cL \triangleright \cK)^{-1}}  \big) v^k  . 
\nonumber 
\end{eqnarray}
It is easy to verify that the relaxed-DRS operator---$ (1-\gamma) \cI + \gamma J_{ (\cL \triangleright \cK)^{-1}} $---is essentially $(\gamma/2)$-averaged \cite[Definition 4.33]{plc_book}, and thus, many results of the relaxed DRS, e.g., \cite[Lemma 2.3, Lemma 2.4 and Theorem 3.1]{hbs_2015}, can be easily obtained from this averagedness.

An important consequence of the `unfortunate' result of Theorem \ref{t_drs}-(ii) is that the  convergence rate of the DRS iteration cannot be improved to that of a proximal mapping (also known as {\it proximity operator}), for instance, the stronger result of \cite[Theorem 2.2]{ppa_guler} for a standard proximal point algorithm is no longer valid for the DRS. 

\vskip.2cm
Before the end of this paper, we pose a few open problems, which appear to be of interest.
\begin{itemize}
\item [(i)] Does Lemma \ref{l_K} or \cite[Proposition 3.1]{fukushima} hold, without the surjectivity of $\cK$?

\item[(ii)] Can `{\it if}' in Proposition \ref{p_not}-(ii) be replaced by `{\it only if}'? Is this sufficient condition  also necessary?

\item[(iii)] Though the conclusion of Proposition  \ref{p_not}-(iii) has been sufficient to show Theorem \ref{t_drs},  can we improve  this result and simply claim that {\it $\cS$ with $C\ne 0$ is not cyclically monotone for general monotone operators $A$ and $B$}? In other words, is  this statement still valid, when dropping the conditions of maximality and cyclicity of $A$ and $B$?
\end{itemize}

\section{Acknowledgements}
I am gratefully indebted to the anonymous reviewer for helpful discussions, particularly related to the proofs of Lemma \ref{l_max} and Proposition \ref{p_not}. 

\section{Data availability}
There is no associated data with this manuscript.

\section{Disclosure statement}
The author declares there are no conflicts of interest regarding the publication of this paper.

\bibliographystyle{siam}

\small{
\bibliography{refs}
}

\end{document}